\newtheorem{theorem}{Theorem}[section]
\newtheorem*{theorem*}{Theorem}
\newtheorem*{corollary*}{Corollary}
\definecolor{darkblue}{rgb}{0.0,0,0.7} 
\newcommand{\darkblue}{\color{darkblue}} 
\newcommand{\defn}[1]{\emph{\darkblue #1}}
\def\supp{\mathsf{supp}}
\begin{document}

\title[The number of roots of full support]{The number of roots of full support}
\author{Marko Thiel}
\address{Department of Mathematics, University of Zurich, Winterthurerstrasse 190, 8050 Zürich, Switzerland}

\begin{abstract} Chapoton has observed a simple product formula for the number of reflections in a finite Coxeter group that have full support. We give a uniform proof of his formula for Weyl groups. We furthermore refine his formula by the length of the roots.\\
\\
MSC2010: 17B22, 20F55, 52C35
\end{abstract}
\maketitle
\section{Introduction}
Let $\Phi$ be an irreducible crystallographic root system of rank $n$ with ambient space $V$, Coxeter number $h$ and exponents $e_1\leq e_2\leq\ldots\leq e_n$.
Fix a set of positive roots $\Phi^+$ for it and let $\Delta=\{\alpha_1,\alpha_2,\ldots,\alpha_n\}$ be the corresponding set of simple roots.
Let $S=\{s_{\alpha_1},s_{\alpha_2},\ldots,s_{\alpha_n}\}$ be the set of simple reflections and let $W=\langle S\rangle$ be the Weyl group of $\Phi$.
See \cite{humphreys90reflection} for background on root systems and Weyl groups.\\
\\
Any positive root $\beta\in\Phi^+$ can be written as a linear combination of simple roots 
\[\beta=\sum_{\alpha\in\Delta}c_{\beta\alpha}\alpha\]
where the coefficients $c_{\beta\alpha}$ are nonnegative integers.
Define the \defn{support} of $\beta$ as the set of those simple roots whose coefficient is nonzero:
\[\supp(\beta):=\{\alpha\in\Delta:c_{\beta\alpha}\neq0\}.\]
The aim of this note is to give a uniform proof of the following statement, observed \defn{case-by-case} by Chapoton.
\begin{theorem}[\protect{\cite[Proposition 1.1]{chapoton06sur}}]\label{main}
 The number of positive roots in $\Phi^+$ whose support is $\Delta$ is given by the formula
 \[\frac{nh}{|W|}\prod_{i=2}^n(e_i-1).\]
\end{theorem}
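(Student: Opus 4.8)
The plan is to compute $F:=\#\{\beta\in\Phi^+:\supp(\beta)=\Delta\}$ by inclusion--exclusion over the standard parabolic sub--root systems, and then to collapse the resulting alternating sum. For $J\subseteq\Delta$ let $\Phi_J:=\Phi\cap\operatorname{span}(J)$ be the sub--root system spanned by $J$; the positive roots whose support is contained in $J$ are exactly the positive roots of $\Phi_J$, so the standard inclusion--exclusion identity gives
\[
F=\sum_{J\subseteq\Delta}(-1)^{|\Delta\setminus J|}\,|\Phi_J^+|.
\]

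The first real step is to prune this sum. Decomposing $\Phi_J$ into its irreducible components and using, componentwise, the classical fact that the number of positive roots of an irreducible root system $\Psi$ equals $\tfrac12\,r_\Psi h_\Psi$ ($r_\Psi$ the rank, $h_\Psi$ the Coxeter number), one rewrites the right--hand side as
\[
F=\tfrac12\sum_{\alpha\in\Delta}\sum_{J\ni\alpha}(-1)^{|\Delta\setminus J|}\,h\bigl(\text{component of }J\text{ containing }\alpha\bigr).
\]
Fixing $\alpha$ and summing over all $J$ having a prescribed connected sub-diagram $C$ as the component through $\alpha$, the inner sign sum vanishes unless every node of $\Delta$ lies in $C$ or is adjacent to $C$. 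Because a Dynkin diagram is a tree, such a connected, dominating sub-diagram is precisely $\Delta\setminus L$ where $L$ is an independent set of leaves of $\Delta$ (each deleted leaf must keep its neighbour in $C$, and deleting leaves never disconnects a tree), and then $\Phi_{\Delta\setminus L}$ is again irreducible. Hence the sum collapses to the short form
\[
F=\tfrac12\sum_{L}(-1)^{|L|}\,(n-|L|)\,h_{\Delta\setminus L},
\]
the sum being over the independent sets $L$ of leaves of $\Delta$, and $h_{\Delta\setminus L}$ the Coxeter number of $\Phi_{\Delta\setminus L}$.

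It remains to match this with $\tfrac{nh}{|W|}\prod_{i=2}^n(e_i-1)$. Using $|W|=\prod_{i=1}^n(e_i+1)$, $nh=2|\Phi^+|$ and $e_1=1$, the target equals $|\Phi^+|\prod_{i=2}^n\frac{e_i-1}{e_i+1}$, and $\prod_{i=2}^n(e_i-1)=\prod_{i=1}^{n-1}(h-1-e_i)$ is the value of the derivative of the characteristic polynomial $\prod_i(t-e_i)$ of the Coxeter arrangement at its largest root $t=h-1$. I would try to make the identification uniform by expressing the Coxeter numbers (equivalently the positive--root counts) of the leaf--deleted sub-diagrams in terms of the exponents of $\Phi$ --- for instance via the coefficients of the highest root, via $2\rho$, or via the generating polynomial $G(q):=\sum_L(-1)^{|L|}q^{|\Phi_{\Delta\setminus L}^+|}$, for which $F=G'(1)$ since $G(1)=0$. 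I expect this to be the genuinely hard point: a priori the short sum still mixes the Coxeter numbers of several non--isomorphic sub-diagrams, so exhibiting its value without a type--by--type check is the real content of the theorem.

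For the refinement by root length I would run the same argument separately for the long and for the short positive roots, replacing $|\Phi_J^+|$ by the number of long (respectively short) positive roots of $\Phi_J$. Since the long roots of an irreducible root system again form a root system with its own Coxeter number, the same pruning works, and the two refined counts must add up to $F$ --- a convenient consistency check.
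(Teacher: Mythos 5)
Your reduction is carried out correctly: the inclusion--exclusion over standard parabolic subsystems, the rewriting of $|\Phi_J^+|$ componentwise via $\tfrac12 r_\Psi h_\Psi$, and the pruning argument showing that only connected \emph{dominating} subdiagrams survive (equivalently, complements of independent sets of leaves, over which $\Phi_{\Delta\setminus L}$ stays irreducible) are all sound. But the proof stops exactly where the theorem begins: you never establish the identity
\[
\tfrac12\sum_{L}(-1)^{|L|}\,(n-|L|)\,h_{\Delta\setminus L}\;=\;\frac{nh}{|W|}\prod_{i=2}^n(e_i-1),
\]
and you say yourself that you only ``would try'' to prove it via the highest root, $2\rho$, or a generating polynomial. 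That left-hand side mixes Coxeter numbers of genuinely different types (delete a leaf of $E_8$ and you get $E_7$, $D_7$ or $A_7$ depending on the leaf), and none of the suggested devices comes with an argument that evaluates it; so the proposal is a reformulation of the statement, not a proof of it --- neither a uniform one (the stated goal, since Chapoton already verified the formula case by case) nor even a completed case-by-case verification. The same incompleteness affects the length refinement, where in addition you would need, as a separate uniform input, that the number of positive roots of a given length in each irreducible component is $\tfrac{h_C}{2}$ times the number of simple roots of that length in that component.

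For comparison, the paper's route avoids any such alternating sum: a root of full support is sent to the principal order ideal $I(\beta)$ it generates in the root poset, which is an order ideal containing all of $\Delta$; Sommers' theorem counts such ideals (with prescribed length of the maximal element) uniformly as $\frac{1}{[N(W_\alpha):W_\alpha]}\chi_{\alpha^\bot}(h-1)$, and the Orlik--Solomon formulas $\chi_{\alpha^\bot}(t)=\prod_{i=1}^{n-1}(t-e_i)$ and $[N(W_\alpha):W_\alpha]\,n_l=(-1)^{n-1}\chi_{\alpha^\bot}(-1)$, together with exponent duality, turn that count into the product formula. If you want to salvage your approach, the missing identity is where all the work lies; proving it uniformly would itself amount to a new result.
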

In fact we will prove a stronger result that refines Theorem \ref{main} by the length of the roots.
Let $\tilde{\alpha}$ be the \defn{highest root} of $\Phi$ and say that a root $\beta\in\Phi$ is \defn{long} if $\|\beta\|=\|\tilde{\alpha}\|$ and \defn{short} otherwise.
Let $n_l$ (respectively $n_s$) be the number of long (respectively short) simple roots in $\Delta$. In particular $n_l+n_s=n$. We have the following theorem.
\begin{theorem}\label{refine}
 The number of long (respectively short) positive roots in $\Phi^+$ whose support is $\Delta$ is given by the formula
 \[\frac{n_lh}{|W|}\prod_{i=2}^n(e_i-1)\quad\text{(respectively }\frac{n_sh}{|W|}\prod_{i=2}^n(e_i-1)\text{)}.\]
\end{theorem}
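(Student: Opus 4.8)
\medskip
\noindent\emph{Proof strategy.} Since Theorem~\ref{main} is available, it is enough to prove the count for \emph{long} full-support roots: the short count then follows by subtraction, because $n_l+n_s=n$ forces
\[\frac{nh}{|W|}\prod_{i=2}^n(e_i-1)-\frac{n_lh}{|W|}\prod_{i=2}^n(e_i-1)=\frac{n_sh}{|W|}\prod_{i=2}^n(e_i-1).\]
It will also be convenient to recall the classical fact that $\Phi^+$ contains exactly $\tfrac{n_lh}{2}$ long roots (and $\tfrac{nh}{2}$ roots in all); granting this, the assertion to be proved is equivalent to saying that the proportion of long roots among the full-support positive roots equals the proportion of long roots among all positive roots. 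In other words, I want to show that ``being long'' and ``having full support'' behave independently, and this independence is what I would ultimately like to exhibit by a length-preserving construction.

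The first concrete plan is to organise the count by the support map $\beta\mapsto\supp(\beta)$. For every $J\subseteq\Delta$, a root lies in the parabolic sub-root-system $\Phi_J$ precisely when its support is contained in $J$, so Möbius inversion over the Boolean lattice $2^\Delta$ gives
\[2\cdot\#\{\beta\in\Phi^+:\supp(\beta)=\Delta,\ \beta\text{ long}\}=\sum_{J\subseteq\Delta}(-1)^{\,n-|J|}\,\bigl|\Phi_J\cap\Phi_{\mathrm{long}}\bigr|,\]
and likewise without the word ``long'' (the latter right-hand side already being evaluated, via Theorem~\ref{main}, to $2\cdot\frac{nh}{|W|}\prod_{i=2}^n(e_i-1)$). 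Writing $\bigl|\Phi_J\cap\Phi_{\mathrm{long}}\bigr|$ as a sum over the connected components of the Dynkin subdiagram carried by $J$, and using for each component that an irreducible root system of rank $m$ and Coxeter number $k$ has $mk$ roots (and $m_lk$ long roots), reduces the problem to a single, type-independent evaluation of the resulting alternating sum over parabolic subsystems.

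An alternative, and perhaps more transparent, route is through the $W$-orbit of the highest root $\tilde\alpha$, whose $W$-orbit is exactly the set of long roots. Since $|\mathrm{Stab}_W(\tilde\alpha)|=|W|/(n_lh)$, orbit–stabiliser counting gives
\[\#\{\beta\in\Phi^+:\supp(\beta)=\Delta,\ \beta\text{ long}\}=\frac{n_lh}{|W|}\,\bigl|\{w\in W:w\tilde\alpha\in\Phi^+\text{ and }\supp(w\tilde\alpha)=\Delta\}\bigr|,\]
and the short count is obtained identically from the highest short root. Thus it would suffice to prove that $\{w\in W:w\tilde\alpha\text{ is a positive root of full support}\}$ has cardinality exactly $\prod_{i=2}^n(e_i-1)$, and that the analogous set attached to the highest short root has the same cardinality (which would simultaneously re-prove Theorem~\ref{main}). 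Here it is worth noting that $\prod_{i=2}^n(e_i-1)=(-1)^{n-1}\prod_{i\neq1}(1-e_i)$ is, up to sign, the value at $1$ of the derivative of the characteristic polynomial $\prod_{i=1}^n(t-e_i)$ of the Coxeter arrangement; this strongly suggests that the required count should be packaged as a Möbius-function computation in that arrangement.

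The main obstacle is exactly this uniform evaluation: converting the alternating sum over parabolic sub-root-systems — equivalently, the cardinalities of the Weyl-group sets above — into the closed form $\frac{n_lh}{|W|}\prod_{i=2}^n(e_i-1)$ without a case-by-case check. I expect the slickest resolution to go through the ``independence'' reformulation: producing a length-preserving bijection (or a length-preserving group action on $\Phi^+$) that makes full support independent of root length, after which the reductions, the Möbius inversion, and the orbit–stabiliser bookkeeping are all routine.
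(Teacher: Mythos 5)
Your proposal consists of two reductions (subtraction via Theorem~\ref{main}, and either a M\"obius inversion over parabolic subsystems or an orbit--stabiliser reformulation using the $W$-orbit of $\tilde\alpha$), but it stops exactly where the actual work begins, and you say so yourself: the ``single, type-independent evaluation of the resulting alternating sum'' and, equivalently, the claim that $\{w\in W: w\tilde\alpha\in\Phi^+ \text{ and } \supp(w\tilde\alpha)=\Delta\}$ has cardinality $\prod_{i=2}^n(e_i-1)$, are precisely the content of the theorem, and no argument for either is given. The hoped-for ``length-preserving bijection making full support independent of length'' is likewise only conjectured, not constructed. So there is a genuine gap: every route you sketch terminates in an unproven uniform enumeration that is equivalent to (or stronger than) the statement being proved. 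The paper closes this gap with specific external inputs: the bijection $\beta\mapsto I(\beta)=\{\gamma\in\Phi^+:\gamma\le\beta\}$ identifies long full-support roots with order ideals of the root poset that contain $\Delta$ and have a long maximal element; Sommers' theorem counts these uniformly as $\frac{1}{[N(W_\alpha):W_\alpha]}\chi_{\alpha^\bot}(h-1)$ for a long simple root $\alpha$; and the Orlik--Solomon formulas $\chi_{\alpha^\bot}(t)=\prod_{i=1}^{n-1}(t-e_i)$ and $[N(W_\alpha):W_\alpha]\,n_l=(-1)^{n-1}\chi_{\alpha^\bot}(-1)$, together with exponent duality, yield the closed form. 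Your observation that $\prod_{i=2}^n(e_i-1)$ looks like a derivative/M\"obius-theoretic quantity for the Coxeter arrangement is a reasonable instinct pointing in this direction, but without an input of this strength the proposal does not constitute a proof.

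Two smaller cautions on the reductions themselves. First, in the M\"obius-inversion route, decomposing $|\Phi_J\cap\Phi_{\mathrm{long}}|$ componentwise via ``an irreducible system of rank $m$ has $m_l k$ long roots'' is delicate: length is being measured in $\Phi$, not in the subsystem $\Phi_J$, and a component consisting entirely of roots that are short in $\Phi$ is simply laced as an abstract root system, so its intrinsic ``long'' count does not equal its intersection with $\Phi_{\mathrm{long}}$; you would need to track $\Phi$-length through the decomposition. Second, deducing the short case from Theorem~\ref{main} by subtraction is logically admissible (Chapoton's result is stated beforehand), but it imports a case-by-case verified statement into a theorem whose point, in this paper, is to be proved uniformly; the paper instead proves both length classes directly and obtains Theorem~\ref{main} as a corollary.
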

\section{Proof of Theorem \ref{refine}}
%
%

Define a partial order on the set of positive roots $\Phi^+$ by $\beta\leq\gamma$ if and only if $\gamma-\beta$ is a linear combination of simple roots with nonnegative coefficients.
Call $\Phi^+$ with this partial order the \defn{root poset}.
\begin{proof}[Proof of Theorem \ref{refine}]
 For a positive root $\beta\in\Phi^+$ define 
 \[I(\beta):=\{\gamma\in\Phi^+:\gamma\leq\beta\}\]
 as the principal order ideal in the root poset generated by $\beta$. We have $\supp(\beta)=I(\beta)\cap\Delta$, so the map
 \[\beta\mapsto I(\beta)\]
 is a bijection from the set of long positive roots whose support is $\Delta$ to the set of order ideals that contain $\Delta$ and whose unique maximal element is a long root.
 By a result of Sommers \cite[Proposition 6.6 (2)]{sommers05stable}, the latter set is counted by
 \[\frac{1}{[N(W_\alpha):W_\alpha]}\chi_{\alpha^\bot}(h-1),\]
 where $\alpha\in\Delta$ is some long simple root, $W_{\alpha}=\{e,s_\alpha\}$ is the parabolic subgroup generated by the reflection $s_\alpha$, $N(W_\alpha)$ is its normalizer in $W$ and $\chi_{\alpha^\bot}$ is the \defn{characteristic polynomial} of the restriction of the Coxeter arrangement to the hyperplane 
 $\alpha^\bot=\{x\in V:\langle x,\alpha\rangle=0\}$.\\
 \\
 We have that \cite[Corollary 3.10]{orlik87coxeter}
 \[\chi_{\alpha^\bot}(t)=\prod_{i=1}^{n-1}(t-e_i)\]
 and \cite[Equations (4.1) and (4.2)]{orlik83coxeter}
 \[[N(W_\alpha):W_\alpha]n_l=(-1)^{n-1}\chi_{\alpha^\bot}(-1),\]
 so 
 \begin{align*}
  \frac{1}{[N(W_\alpha):W_\alpha]}\chi_{\alpha^\bot}(h-1)&=\frac{n_l}{\prod_{i=1}^{n-1}(1+e_i)}\prod_{i=1}^{n-1}(h-1-e_i)\\
  &=\frac{n_lh}{\prod_{i=1}^{n}(1+e_i)}\prod_{i=2}^{n}(e_i-1)\\
  &=\frac{n_lh}{|W|}\prod_{i=2}^{n}(e_i-1).
 \end{align*}
 Here we used the duality of exponents $h-e_i=e_{n+1-i}$, $h=e_n+1$ and $|W|=\prod_{i=1}^{n}(1+e_i)$.
 The argument for short positive roots of full support is identical.
 \section{Acknowledgements}
 The author would like to thank the anonymous referees for pointing out references and suggesting the inclusion of the more refined Theorem \ref{refine}.
\end{proof}

\bibliographystyle{alpha}
\bibliography{literature}

\end{document}